\numberwithin{equation}{section}
\newcommand{\F}{\mathbb{F}}
\newcommand{\C}{\mathbb{C}}
\newcommand{\R}{\mathbb{R}}
\newtheorem{theorem}{Theorem}[section]
\newtheorem{prop}[theorem]{Proposition}
\newtheorem{exa}[theorem]{Example}
\begin{document}

\title[Action of Complex Symplectic Matrices]{Action of Complex Symplectic Matrices on the Siegel Upper Half Space}

\author[Acharya]{Keshav Raj Acharya}
\address{Department of Mathematics,
Embry-Riddle Aeronautical University,
600 S. Clyde Moris Blvd., Daytona Beach, FL 32114, U.S.A.}
\email{acharyak@erau.edu}

\author[McBride]{Matt McBride}
\address{Department of Mathematics and Statistics,
Mississippi State University,
175 President's Cir., Mississippi State, MS 39762, U.S.A.}
\email{mmcbride@math.msstate.edu}

\thanks{}

\date{\today}

\begin{abstract} 
The Siegel upper half space, $\mathcal{S}_n$, the space of complex symmetric matrices, $Z$ with positive definite imaginary part, is the generalization of the complex upper half plane in higher dimensions.  In this paper, we study a generalization of linear fractional transformations, $\Phi_S$, where $S$ is a complex symplectic matrix, on the Siegel upper half space. We partially classify the complex symplectic matrices for which $\Phi_S(Z)$ is well defined. We also consider $\mathcal S_n$ and $\overline{\mathcal S}_n$ as metric spaces and discuss distance properties of the map $\Phi_S$ from $\mathcal S_n$ to $\mathcal{S}_n$ and $\overline{\mathcal S}_n$ respectively.
	
%
	%

\end{abstract}

\maketitle

\section{Introduction}
The Siegel upper half-space which we denote by  $\mathcal{S}_n$ is the set of all $n\times n$ symmetric matrices $Z$ of the form: $Z=X+iY$ where $X$ and $Y$ are symmetric matrices and $Y$ is positive definite. When $n =1$, then
\begin{equation*}
\mathcal{S}_1 = \C^{+} =\{z\in \C : z= x+iy, \,\, y>0 \}\ .
\end{equation*}  
Therefore the space $\mathcal{S}_n$ is a generalization of the complex upper half plane. For more on details on the Siegel space see \cite{Sie}. In this paper we study the action of symplectic group $SP_{2n}(\C)$ on  $\mathcal S_n$. \\ 

The symplectic group $SP_{2n}(\C)$ is the group of all $2n\times 2n$ complex matrices $S$ satisfying $S^tJS =J$ with 

\begin{equation*} 
J=
\begin{pmatrix}
O & I_n \\ -I_n & O 
\end{pmatrix}
\end{equation*} 
where $I_n$ is the $n\times n$ identity matrix and $O$ is the $n\times n$ zero matrix. 
  	 
When restriced to $\R$, the symplectic group $ SP_{2n}(\R)$ is a generalization of the group $SL_2(\R)$ to higher dimension. The action that we consider here has similarities with the action of $SL_2(\R)$ on the hyperbolic plane $\C^+$. A study of this action was done by Carl Ludwig Siegel in 1943 and published his work in the book ``Symplectic Geometry" in which not only the analytical and geometrical aspects of the action are considered but also some applications to number theory. In 1999, Freitas gave more insight into this theory from the point of view of topological spaces, see \cite{F} for details. In particular he compactified the space and extended the action continuously to a compact domain. There are several studies on the Siegel upper half space motivated from other areas, for example \cite{Oh, Jose}. However, the action of a complex symplectic matrix $SP_{2n}(\C)$ has yet to be studied. Here we consider complex symplectic matrix with non zero imaginary part. \\

To relate the symplectic group to the Siegel upper half space, we write a symplectic matrix $S \in SP_{2n}(\C)$ in the form of block matrices 
\begin{equation*}
S = \begin{pmatrix} 
 A & B \\ C & D 
\end{pmatrix}
\end{equation*}
where $A, B, C, D $ are $ n\times n $ complex matrices. It can be easily seen by using the equation $S^tJS =J$ that $S$ is symplectic if and only if $A^tC$ and $B^tD$ are symmetric and $A^tD-C^tB= I_n$. \\
  
For any $2n\times 2n$ complex block matrix  
\begin{equation*}
S = \begin{pmatrix} 
 A & B \\ C & D 
\end{pmatrix}
\end{equation*}  
define the following matrix $\Phi_S$ by
\begin{equation*}
\Phi_S(Z) = S(Z)= (AZ+B)(CZ+D)^{-1}
\end{equation*} 
where  $Z$ is a $n\times n$ complex matrix. This map is a generalization of a fractional linear transformation on complex plane. This map is well defined only if $CZ+D$ is invertible. \\
  
In \cite{F, Sie}, it is shown that if  $S \in SP_{2n}(\R)$ and $Z \in \mathcal S_n$ then  $\Phi_S $ is well defined and it maps the Siegel upper half space to itself. Since $SP_{2n}(\R)$ is a group, the map  $ \Phi_S: \mathcal S_n \rightarrow \mathcal S_n $ has been considered as a group action of $SP_{2n}(\R)$ on $\mathcal S_n$. However, this is not true if  $ S\in SP_{2n}(\C)$. 
  
\begin{exa}
Let $S =  \begin{pmatrix}  I_n & iI_n\\ iI_n & O \end{pmatrix}$.  Notice that $S\in SP_{2n}(\C)$ and $iI_n \in \mathcal S_n $.  Therefore 
\begin{equation*}
\Phi_S(iI_n)= (iI_n +iI_n)(iI_niI_n)^{-1} = -2iI_n\ .
\end{equation*}
Thus $\Phi_S(iI_n) \notin \mathcal S_n$ .
\end{exa}

Let $A(2n, \C) $ be the space of all matrices $S$ that satisfy $S^tJS = -J$, that is $S$ is antisymplectic. Define the following space:
\begin{equation*}
\overline{\mathcal S}_n = \{ Z\in \C^{d\times d}: Z = X +iY,  \,\, X^t=X, Y^t =Y, \,\, Y< 0 \}\ .
\end{equation*}

Following the methods of \cite{F}, it is easy to see that if $S\in A(2n, \R)$, $\Phi_S$ maps the Siegel half space to the space $\overline{\mathcal S}_n$. We will show this in Section 2.  Since the product of antisymplectic matrices is a symplectic matrix, the set $A(2n, \C)$ is not a group. Thus $\Phi_S $ is not a group action for such $S$. \\
   
Again, if we consider $S$ to be antisymplectic complex matrix,  $ \Phi_S $  does not map $\mathcal{S}_n$ to  $\overline{\mathcal{S}}_n$.
   
\begin{exa}  Let $S =  \begin{pmatrix}  iI_n & O\\ O & iI_n \end{pmatrix}  $  then $S\in A(2n, \C)$ and $iI_n \in \mathcal S_n$.  However $\Phi_S(iI_n) \notin \overline{\mathcal S}_n $ since
\begin{equation*}
\Phi_S(iI_n)= (iI_niI_n)(iI_n)^{-1} = iI_n \ .
\end{equation*}
\end{exa} 
    
One of the main goals in this paper is to partially classify all complex symplectic matrices for which the map $ \Phi_S: \mathcal S_n \rightarrow \mathcal S_n $ is well defined on $\mathcal S_n.$

We also want to consider a metric on  $\mathcal S_n$ as a generalization of the hyperbolic metric 

\begin{equation}\label{hm}
ds = \frac{\sqrt{dx^2 +dy^2}}{y}
\end{equation}
on the complex hyperbolic plane $\C^+.$  For this we consider the Finsler metric as in \cite{Froese} as follows.
   
Let $Z= X+iY \in \mathcal S_n $ and let the complex symmetric matrix $W$ be an element of the tangent space at $Z$. Consider the Finsler norm $F_Z$ defined by
\begin{equation*}
F_Z(W) = \| Y^{-\frac{1}{2}} W  Y^{-\frac{1}{2}}\|
\end{equation*} 
where $ \|\cdot\|$ is the usual matrix (operator) norm. This Finsler norm defines a metric on $\mathcal S_n$ as
\begin{equation*}
d_{\infty}(Z_1, Z_2) = \inf_{Z(t)} \int _0^1 F_{Z(t)}(\dot{Z}(t))\ dt
\end{equation*}
where the infimum is taken over all differentiable paths $Z(t)$ joining $Z_1$ to $Z_2.$
   
In the case when $n=1$, the Siegel upper half space $\mathcal S_1 = \C^+$ and the metric $d_{\infty}$ is same as the the hyperbolic metric defined in equation (\ref{hm}) on $\C^+$.  The length of a curve in the Euclidean plane is measured by $\sqrt{dx^2 +dy^2}$.   Similarly, the length of a curve $\gamma(t) = x(t) +i y(t)$, $t\in [0,1]$ on the hyperbolic plane $\C^+$ is defined by

\begin{equation*}
h(\gamma) = \int _0^1 \frac{1}{y(t)}\sqrt{\left(\frac{dx}{dt}\right)^2 + \left(\frac{dy}{dt}\right)^2}\ dt\ .
\end{equation*}
The hyperbolic distance between two points $z, w \in C^+$ is defined by 

\begin{equation*}
\rho(z,w) = \inf h(\gamma)
\end{equation*}
where the infimum is taken over all piecewise differentiable curves joining $z$ and $w$.  So for  $n=1$,  we have $d_{\infty}(z, w)= \rho (z, w)$.  Thus the Finsler metric $d_{\infty}$ on $\mathcal S_n$ is a generalization of the hyperbolic metric $\rho$ on $\C^+$.
    
We also want to define a metric on $\mathcal \overline{\mathcal S}_n$.  First note that the complex conjugate of a given matrix $Z= X+iY \in\overline{\mathcal S}_n$ is a matrix $\overline{Z} = X-iY \in \mathcal S_n$.   A metric $d_-$ on $ \overline{\mathcal S}_n $ is defined by  
\begin{equation*}
d_-(Z_1, Z_2) = d_{\infty}(\overline{Z}_1, \overline{Z}_2 )\ .
\end{equation*}
 Thus $(\mathcal S_n,d_{\infty})$ and $(\overline{\mathcal S}_n, d_-)$ are metric spaces.
  
\section{Preliminaries and Results}
 As mentioned above,  it is shown in \cite{F} that for each real symplectic matrix $S$, the map $\Phi_S$ maps the Siegel upper half space to itself.  However, it is not true when $S$ is real antisymplectic matrix.  In fact, it maps $\mathcal{S}_n$ onto $\overline{\mathcal{S}}_n$.  We have the following proposition.

\begin{prop} \label{p} For $S\in A(2n, \R) $ the map $\Phi_S $ maps $\mathcal S_n$ onto $\overline{\mathcal S}_n$. 
\end{prop}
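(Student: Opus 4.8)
The plan is to avoid redoing the imaginary-part computation from scratch and instead reduce the statement to the symplectic case established in \cite{F, Sie}. First I would fix the single antisymplectic ``reflection''
\[
K = \begin{pmatrix} I_n & O \\ O & -I_n \end{pmatrix},
\]
and record two facts. A direct block computation gives $K^t J K = -J$, so $K \in A(2n,\R)$, and
\[
\Phi_K(Z) = (I_n Z + O)(O\cdot Z - I_n)^{-1} = -Z .
\]
In particular $\Phi_K$ is defined for every complex symmetric $Z$ (its denominator is the constant $-I_n$), and $Z \mapsto -Z$ is a bijection of $\mathcal S_n$ onto $\overline{\mathcal S}_n$ because it negates the imaginary part.

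Next, given an arbitrary $S \in A(2n,\R)$, I would set $S_0 = KS$ and check that $S_0 \in SP_{2n}(\R)$. Using $S^t J S = -J$ together with $K^t J K = -J$ one computes
\[
S_0^t J S_0 = S^t K^t J K\, S = S^t(-J)S = -(S^t J S) = J .
\]
Since $K^2 = I_{2n}$, this rewrites $S$ as $S = K S_0$ with $S_0$ symplectic, which is exactly the coset structure of the antisymplectic matrices that I want to exploit.

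The main tool is then the composition law $\Phi_{MN} = \Phi_M \circ \Phi_N$, which I would verify by the usual block multiplication; the point of that computation is that the denominator of $\Phi_{MN}$ at $Z$ factors as $(C_M\Phi_N(Z)+D_M)(C_N Z + D_N)$. Applied to $S = K S_0$ and $Z \in \mathcal S_n$, the cited result of \cite{F, Sie} guarantees that the denominator $C_0 Z + D_0$ of $\Phi_{S_0}$ is invertible and that $\Phi_{S_0}(Z) \in \mathcal S_n$, while the factor contributed by $K$ is the constant $-I_n$. Hence the denominator $CZ+D$ of $\Phi_S$ is invertible, $\Phi_S$ is well defined on all of $\mathcal S_n$, and
\[
\Phi_S(Z) = \Phi_K\bigl(\Phi_{S_0}(Z)\bigr) = -\,\Phi_{S_0}(Z) .
\]

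Finally I would read off both assertions from this formula. For $Z \in \mathcal S_n$ we have $\Phi_{S_0}(Z) \in \mathcal S_n$, so $\Phi_S(Z) = -\Phi_{S_0}(Z) \in \overline{\mathcal S}_n$, giving the inclusion. For surjectivity, $\Phi_{S_0}\colon \mathcal S_n \to \mathcal S_n$ is a bijection (its inverse is $\Phi_{S_0^{-1}}$, since $SP_{2n}(\R)$ is a group), and $W \mapsto -W$ is a bijection $\mathcal S_n \to \overline{\mathcal S}_n$; their composite $\Phi_S$ is therefore onto $\overline{\mathcal S}_n$. The only place that demands care is the bookkeeping in the composition law, namely confirming that well-definedness of $\Phi_S$ on $\mathcal S_n$ descends from that of $\Phi_{S_0}$; but since $\Phi_K$ has a constant invertible denominator this is harmless. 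As an alternative staying closer to \cite{F}, one could instead show directly that $\Phi_S(Z)$ is symmetric and that $\mathrm{Im}\,\Phi_S(Z) = -(\overline{CZ+D})^{-t}\,(\mathrm{Im}\,Z)\,(CZ+D)^{-1}$, where the leading sign (opposite to the symplectic case) is exactly what $S^t J S = -J$ produces; the right-hand side is a negative-definite congruence of $\mathrm{Im}\,Z > 0$, which again places $\Phi_S(Z)$ in $\overline{\mathcal S}_n$.
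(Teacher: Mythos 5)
Your proof is correct, and it takes a genuinely different route from the paper's. The paper redoes the Freitas-type computation directly for antisymplectic $S$: setting $E=AZ+B$, $F=CZ+D$, it uses $S^tJS=-J$ to get $\frac{1}{2i}(E^*F-F^*E)>0$ (hence $F$ invertible), shows $EF^{-1}$ is symmetric and $\operatorname{Im}(EF^{-1})<0$ by a congruence, and proves surjectivity by exhibiting, for each $W\in\overline{\mathcal S}_n$, an explicit antisymplectic matrix ${S_W}_-$ with $\Phi_{{S_W}_-}(iI_n)=W$. You instead factor $S=KS_0$ with $K=\begin{pmatrix} I_n & O\\ O & -I_n\end{pmatrix}$ antisymplectic and $S_0=KS\in SP_{2n}(\R)$, and pull everything back to the known real symplectic case through the composition law $\Phi_{KS_0}=\Phi_K\circ\Phi_{S_0}$ (this is the paper's Proposition \ref{r1}, whose proof is independent of Proposition \ref{p}, so there is no circularity), with the well-definedness bookkeeping handled by the constant invertible denominator $-I_n$ of $\Phi_K$. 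Your reduction buys brevity and, notably, a cleaner surjectivity: as literally written, the paper's onto argument only shows that each $W\in\overline{\mathcal S}_n$ equals $\Phi_T(iI_n)$ for \emph{some} $T\in A(2n,\R)$ depending on $W$, whereas your composite of the bijection $\Phi_{S_0}\colon\mathcal S_n\to\mathcal S_n$ with the bijection $Z\mapsto -Z$ shows that the \emph{fixed} map $\Phi_S$ is onto, which is exactly what the statement asserts. What the paper's approach buys is self-containedness (no appeal to bijectivity of the real symplectic action) and reusability: the $E$, $F$ manipulations it develops are used again almost verbatim in the later theorem on complex symplectic $S$ with $i(S^*JS-J)\ge 0$. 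Your closing alternative, $\operatorname{Im}\Phi_S(Z)=-\left((CZ+D)^*\right)^{-1}\operatorname{Im}(Z)\,(CZ+D)^{-1}$ for real antisymplectic $S$, is also correct and is essentially the paper's congruence argument in closed form.
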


\begin{proof}  
We show that $CZ+D$ is invertible for any $Z \in \mathcal S_n $ and $\Phi_S(Z)=(AZ+B)(CZ+D)^{-1} \in \overline{\mathcal S}_n .$  \\ Set $E=AZ+B$ and $F=CZ+D$. First we show that $F$ is invertible. We can write $\Phi_S(Z)$ as 

\begin{equation*}
S \begin{pmatrix} Z\\I\end{pmatrix} = \begin{pmatrix} E\\F\end{pmatrix}\ .
\end{equation*} 
From which we have,    
   
\begin{equation}\label{lab2}
\begin{aligned}
\frac{1}{2i}(E^*\,\, F^*) J \begin{pmatrix} E\\F\end{pmatrix} & = \frac{1}{2i}(Z^*\,\, I_n)S^tJS \begin{pmatrix} Z\\I_n\end{pmatrix}  = \frac{1}{2i}(Z^*\,\, I_n) (-J)\begin{pmatrix} Z\\I_n\end{pmatrix} \\ 
&= \frac{1}{2i}(Z^*\,\, I_n) \begin{pmatrix} O & -I_n\\ I_n  & O\end{pmatrix} \begin{pmatrix} Z\\I_n\end{pmatrix} \\  
&=\frac{1}{2i}(Z-Z^*) > 0\ .
\end{aligned}
\end{equation} 	
It follows that,	 

\begin{equation}\label{lab4}
\frac{1}{2i}(E^*F-F^*E)>0 \ .
\end{equation}
To see $F$ invertible, suppose $v$ is a solution of $Fv=0$ then we have $v^*F^*= 0$	and 

\begin{equation*}
v^*(E^*F-F^*E)v=0\ .
\end{equation*} 
From equation (\ref{lab4}), we have $v=0$ which implies $F$ invertible.
    	
Next we show that $\Phi_S (Z)$ is symmetric and $\operatorname{Im} \Phi_S (Z) <0$.  We have

\begin{equation*}
\begin{aligned}
(E^t\,\, F^t) J \begin{pmatrix} E\\F\end{pmatrix} & = (Z^t\,\, I_n)S^tJS \begin{pmatrix} Z\\I_n\end{pmatrix} = (Z^t\,\, I_n) (-J)\begin{pmatrix} Z\\I_n\end{pmatrix} \\ 
&= Z-Z^t = O\ .
\end{aligned}
\end{equation*}
This implies that $F^tE= E^tF$.  From this we get, 

\begin{equation*}
(EF^{-1})^t   =  (F^{-1})^tE^t  =(F^{-1})^t F^t E F^{-1}  = EF^{-1}
\end{equation*}
which means $\Phi_S (Z) $ is symmetric.   Moreover we have, 

\begin{equation*}
\begin{aligned} 
\operatorname{Im} \Phi_S (Z)= \operatorname{Im} (EF^{-1}) & = \frac{1}{2i}(EF^{-1}- (EF^{-1})^*) \\ &= \frac{1}{2i} (F^{-1})^*F^*(EF^{-1}- (EF^{-1})^*)F F^{-1} \\ &= \frac{1}{2i} (F^{-1})^*(F^*E- E^*F) F^{-1} \ .
\end{aligned}
\end{equation*}
Thus $\frac{1}{2i}(F^*E- E^*F) < 0$.  Now for any $v \in \C^n$ and $F^{-1}v\in \C^n$ we have

\begin{equation*}
\begin{aligned}
&v^*\frac{1}{2i} (F^{-1})^*F^*(EF^{-1}- (EF^{-1})^*)F F^{-1}v \\
&=(F^{-1}v)^* \frac{1}{2i} (F^*(EF^{-1}- (EF^{-1})^*)F) F^{-1}v \\ 
&=(F^{-1}v)^* \frac{1}{2i} (F^*E- E^*F) (F^{-1}v) < 0 \ .
\end{aligned}
\end{equation*}
It follows that $ \operatorname{Im} (EF^{-1}) < 0$.  Hence $\Phi_S$ maps $\mathcal S_n$ to $\overline{\mathcal S}_n$. 

Finally we show the map $\Phi_S $ is onto. Suppose $Z= X+iY \in \overline{\mathcal S}_n.$  We have 

\begin{equation*}
{S_Z}_- = \begin{pmatrix} - \sqrt{-Y} & X \sqrt{-Y^{-1}}\\ O & \sqrt{-Y^{-1}} \end{pmatrix} \in A(2n, \R)
\end{equation*} 
and  $\Phi_{{S_Z}_-}(iI_n) = Z$. $-Y$ is a symmetric positive definite matrix, therefore the Spectral Theorem guarantees that $\sqrt{-Y}$ and $\sqrt{-Y^{-1}}$ exist.  This completes the proof.
\end{proof}

As a consequence of this proposition we have the following theorem.
    
   
\begin{theorem}
If $S\in SP_{2n}(\C)$ is purely imaginary then $\Phi_S$ maps Siegel half space to the space $\overline{\mathcal S}_n$.
\end{theorem}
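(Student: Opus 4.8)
The plan is to reduce the purely imaginary complex case to the real antisymplectic case already settled in Proposition \ref{p}. A purely imaginary symplectic matrix can be written as $S = iT$ for a real matrix $T$, and the two observations I would establish are: (i) $T$ is real \emph{antisymplectic}, and (ii) the transformation is insensitive to this overall scalar, i.e. $\Phi_S = \Phi_T$. Once both hold, the conclusion is immediate from Proposition \ref{p}.

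First I would set $S = iT$ with $T = \begin{pmatrix} a & b \\ c & d \end{pmatrix}$ a real $2n\times 2n$ matrix whose $n\times n$ blocks $a,b,c,d$ are real, so that $A=ia$, $B=ib$, $C=ic$, $D=id$. Substituting into the symplectic relation gives $S^tJS = (iT)^tJ(iT) = i^2 T^tJT = -T^tJT = J$, whence $T^tJT = -J$. Thus $T \in A(2n,\R)$ is real antisymplectic, which is precisely the hypothesis of Proposition \ref{p}.

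Next I would carry out the short computation showing $\Phi_S(Z) = \Phi_T(Z)$ for every admissible $Z$. Since $AZ+B = i(aZ+b)$ and $CZ+D = i(cZ+d)$, we have $(CZ+D)^{-1} = -i\,(cZ+d)^{-1}$, and multiplying out, the scalars cancel: $\Phi_S(Z) = i(aZ+b)\cdot(-i)(cZ+d)^{-1} = (aZ+b)(cZ+d)^{-1} = \Phi_T(Z)$. In particular $CZ+D$ is invertible exactly when $cZ+d$ is, which Proposition \ref{p} guarantees for every $Z\in\mathcal S_n$; hence $\Phi_S$ is well defined on $\mathcal S_n$.

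Finally, because $T \in A(2n,\R)$, Proposition \ref{p} tells us that $\Phi_T$ sends $\mathcal S_n$ into (indeed onto) $\overline{\mathcal S}_n$, and since $\Phi_S = \Phi_T$ the same holds for $\Phi_S$. The argument is essentially a reduction, so there is no genuine analytic obstacle; the only point demanding care is the bookkeeping of the factor $i$. Specifically, one must note that the sign flip from $i^2 = -1$ forces $T$ to be antisymplectic rather than symplectic, and it is exactly this sign that routes the image into $\overline{\mathcal S}_n$ instead of back into $\mathcal S_n$.
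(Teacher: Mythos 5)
Your proof is correct and follows essentially the same route as the paper: both write $S = iT$ with $T$ real, deduce $T^tJT = -J$ from $S^tJS = J$ so that $T \in A(2n,\R)$, observe that the scalar $i$ cancels so $\Phi_S = \Phi_T$, and then invoke Proposition \ref{p}. Your additional remark about well-definedness (invertibility of $CZ+D$) is a nice touch of care but does not change the argument.
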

   
\begin{proof}
Let $S = iQ$, where 

\begin{equation*}
Q = \begin{pmatrix}  A & B \\ C & D \end{pmatrix}\ .
\end{equation*} 
Since $S^tJS = J$, we have $Q^tJQ = -J$, hence $Q \in A(2n,\R)$.  Moreover we have 

\begin{equation*}
\begin{aligned}
\Phi_{iQ}(Z) & = (iAZ+iB)(iCZ+iD)^{-1}\\
   	 & =(AZ+B)(CZ+D)^{-1} \\ 
   	 & =\Phi_{Q}(Z)\ .
\end{aligned}
\end{equation*}
Since $Q \in A(2n,\R)$,  by Proposition \ref{p} $\Phi_{Q}$ maps $\mathcal S_n$ to $\overline{\mathcal S}_n$ and hence $\Phi_{iQ}$ maps $\mathcal{S}_n$ to $\overline{\mathcal{S}}_n$.  Thus completing the proof. 
\end{proof}
   
In the following theorem we partially classify the matrices from $  SP_{2n}(\C)$ so that the action is well defined.
  
\begin{theorem}
Let
\begin{equation*}
S =  \begin{pmatrix}  A & B\\ C & D \end{pmatrix}  \in SP_{2n} (\C)
\end{equation*}
and $ Z \in \mathcal S_n$.  If $i(S^*JS-J)\ge 0$ then $\Phi_S(Z) = S(Z)= (AZ+B)(CZ+D)^{-1} \in \mathcal S_n$.
\end{theorem}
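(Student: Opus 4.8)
The plan is to follow the template of the proof of Proposition~\ref{p}, replacing the transpose relation $S^tJS=-J$ used there by a careful analysis of the Hermitian form built from $S^*JS$. As there, set $E=AZ+B$ and $F=CZ+D$, so that
\begin{equation*}
\begin{pmatrix} E\\ F\end{pmatrix} = S\begin{pmatrix} Z\\ I_n\end{pmatrix}, \qquad (E^*\ F^*) = (Z^*\ I_n)S^*.
\end{equation*}
Everything will hinge on the single $n\times n$ Hermitian matrix $\frac{1}{2i}(E^*F-F^*E)$, so the first step is to evaluate it. Since $(E^*\ F^*)J\begin{pmatrix} E\\ F\end{pmatrix}=E^*F-F^*E$, I would write
\begin{equation*}
\frac{1}{2i}(E^*F-F^*E)=\frac{1}{2i}(Z^*\ I_n)\,S^*JS\begin{pmatrix} Z\\ I_n\end{pmatrix}.
\end{equation*}

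The decisive step is to split $S^*JS=J+(S^*JS-J)$ and treat the two pieces separately. The $J$-piece, $\frac{1}{2i}(Z^*\ I_n)J\begin{pmatrix} Z\\ I_n\end{pmatrix}$, evaluates to $\frac{1}{2i}(Z^*-Z)=-Y$, which is negative definite because $Z=X+iY\in\mathcal S_n$ has $Y>0$. For the correction piece I would set $M=i(S^*JS-J)$, which is positive semidefinite by hypothesis, and use $\frac{1}{2i}(S^*JS-J)=-\tfrac12 M$; writing $V=\begin{pmatrix} Z\\ I_n\end{pmatrix}$, this piece equals $-\tfrac12 V^*MV$, which is negative semidefinite since congruence preserves positive semidefiniteness even for the rectangular $V$. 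Adding the two contributions gives
\begin{equation*}
\frac{1}{2i}(E^*F-F^*E)=-Y-\tfrac12 V^*MV<0,
\end{equation*}
that is, a strictly negative definite matrix, the first summand alone already being so.

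With this in hand the three remaining claims follow exactly as in Proposition~\ref{p}, but with the signs reversed. For invertibility of $F$: if $Fv=0$ for some $v\in\C^n$ then $v^*(E^*F-F^*E)v=0$, which forces $v=0$ by negative definiteness, so $F=CZ+D$ is invertible and $\Phi_S(Z)$ is well defined. For symmetry I would invoke the genuine symplectic identity $S^tJS=J$ (with the transpose, valid since $S\in SP_{2n}(\C)$), which yields $E^tF-F^tE=(Z^t\ I_n)J\begin{pmatrix} Z\\ I_n\end{pmatrix}=Z^t-Z=O$, hence $F^tE=E^tF$ and $(EF^{-1})^t=EF^{-1}$ as in the proposition. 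Finally the congruence identity
\begin{equation*}
\operatorname{Im}(EF^{-1})=\frac{1}{2i}(F^{-1})^*(F^*E-E^*F)F^{-1}=-(F^{-1})^*\Big[\tfrac{1}{2i}(E^*F-F^*E)\Big]F^{-1}
\end{equation*}
turns the negative definiteness of $\frac{1}{2i}(E^*F-F^*E)$ into positive definiteness of $\operatorname{Im}\Phi_S(Z)$, so $\Phi_S(Z)\in\mathcal S_n$.

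The main obstacle is not any single hard estimate but the sign bookkeeping: the hypothesis is stated as $i(S^*JS-J)\ge 0$, and one must track how this passes through the scalar $\frac{1}{2i}$ and the decomposition of $S^*JS$ to land on the correct (negative definite) sign for $\frac{1}{2i}(E^*F-F^*E)$ --- opposite to the antisymplectic situation of Proposition~\ref{p}. The conceptual point that makes the proof clean is the separation of roles: the transpose identity $S^tJS=J$ controls symmetry, while the Hermitian defect $S^*JS-J$ controls the imaginary part, and it is exactly this defect that the hypothesis constrains. Everything else reduces to the rectangular-congruence fact $M\ge0\Rightarrow V^*MV\ge0$ and the observation that a strictly negative definite form admits no nonzero isotropic vector.
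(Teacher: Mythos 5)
Your proposal is correct and follows essentially the same route as the paper's proof: the hypothesis is exploited as the rectangular-congruence inequality $V^*\bigl[i(S^*JS-J)\bigr]V\ge 0$ with $V=\begin{pmatrix} Z\\ I_n\end{pmatrix}$, yielding the key bound $\frac{1}{2i}(E^*F-F^*E)\le -Y<0$ (the paper writes the equivalent $\frac{1}{i}(F^*E-E^*F)\ge 2\operatorname{Im}(Z)>0$), after which invertibility of $F$, symmetry via $S^tJS=J$, and positivity of the imaginary part via the congruence identity are handled identically. Your splitting $S^*JS=J+(S^*JS-J)$ is just a reorganization of the paper's expansion, not a different argument.
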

   
\begin{proof} 
For convenience set $E = AZ+B$ and $F=CZ+D$.  Suppose $i(S^*JS -J) \ge 0 $. For any $ Z \in \mathcal S_n$ we have 

\begin{equation*}
(Z^*\,\, I_n) \left(i(S^*JS-J) \right) \begin{pmatrix} Z \\ I_n  \end{pmatrix} \ge 0\ .
\end{equation*}  

On the other hand

\begin{equation*}
\begin{aligned}
&(Z^*\,\, I_n) \left(i(S^*JS-J) \right) \begin{pmatrix} Z \\ I_n  \end{pmatrix}\\
&=i\left(\left(S \begin{pmatrix} Z \\ I_n \end{pmatrix} \right)^*\left(S\begin{pmatrix} Z \\ I_n \end{pmatrix}\right) - (Z^*\,\, I_n)J\begin{pmatrix} Z \\ I_n \end{pmatrix}\right) \\
&= i(E^*\,\, F^*)J\begin{pmatrix} F \\ -E\end{pmatrix} - i(Z^*\,\, I_n)\begin{pmatrix} I_n \\ -Z\end{pmatrix} \\
&=i(E^*F - F^*E) - \frac{1}{i}(Z - Z^*)\ .
\end{aligned}
\end{equation*}
Therefore we have $i(E^*F - F^*E) +i(Z - Z^*) \ge 0$ and thus

\begin{equation*}
\frac{1}{i}(F^*E - E^*F) \ge \frac{1}{i}(Z-Z^*) = 2\textrm{Im}(Z)>0
\end{equation*}
since $Z\in\mathcal{S}_n$.  To show that $S(Z)\in\mathcal{S}_n$ we need to show that $F$ is invertible, $S(Z)$ is symmetric, and $\textrm{Im}(S(Z))>0$.  First we show that $F$ is invertible.  Suppose $Fv = 0$ for $v\in \C^n$, then it follows that $v^*(F^*E-E^*F)v/{2i} = 0$.   Since $-i(F^*E - E^*F)>0$, we have $v=0$.  Hence $F^{-1}$ exists and $EF^{-1}$ is well-defined.

Next we show symmetry.  Notice that since $S$ is a symplectic matrix and $Z\in\mathcal{S}_n$, we have

\begin{equation*}
\begin{aligned}
E^tF-F^tE &= \begin{pmatrix} Z \\ I_n \end{pmatrix}^tS^tJS\begin{pmatrix} Z \\ I_n \end{pmatrix} = \begin{pmatrix} Z \\ I_n \end{pmatrix}^tJ\begin{pmatrix} Z \\ I_n \end{pmatrix} = Z^t - Z = O\ .
\end{aligned}
\end{equation*}

Therefore $E^tF = F^tE$ and thus by the invertibility of $F$ we have

\begin{equation*}
(EF^{-1})^t = (F^{-1})^tE^t = (F^{-1})^tF^tEF^{-1} = EF^{-1}\ ,
\end{equation*}
thus showing the symmetry of $S(Z)$.  Finally a similar calculation to the one at the beginning of the proof shows that

\begin{equation*}
\textrm{Im}(S(Z)) = (F^{-1})^*\frac{1}{2i}\left(F^*E - E^*F\right)F^{-1}
\end{equation*}
and since $-i(F^*E - E^*F)>0$, it follows that $\textrm{Im}(S(Z))>0$.  This completes the proof.
\end{proof}

It should be noted the other direction may or may not be true.  One of the many issues in showing the other direction is going from $n\times n$ complex matrices to $2n\times 2n$ complex matrices.  The direction proven is simpler since the dimensions are going down.  In fact, one can give necessary and sufficient conditions on a self-adjoint block matrix  when it is positive definite.  In fact \cite{BV} stated without proof, and \cite{Gallier} proved in unpublished work, conditions on when a symmetric block matrix is positive definite.  This proof can be easily modified to allow for generic self-adjoint block matrices.  We state it without proof.

\begin{prop}
Let $M$ be a $2n\times 2n$ self-adjoint block matrix of the form:
\begin{equation*}
M = \begin{pmatrix} \alpha & \beta \\ \beta^* & \gamma \end{pmatrix}\ . 
\end{equation*}
The following are equivalent:
\begin{enumerate}
\item $M\ge 0$
\item $\alpha\ge 0$,\quad $(I_n - \alpha\alpha^\dag)\beta = O$,\quad and $\gamma - \beta^*\alpha^\dag\beta \ge 0$
\item $\gamma \ge 0 $,\quad $(I_n - \gamma\gamma^\dag)\beta^* = O$,\quad and $\alpha - \beta\gamma^\dag\beta^* \ge 0$
\end{enumerate}
where $P^\dag$ is the pseudo-inverse of $P$.
\end{prop}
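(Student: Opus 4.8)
The plan is to prove the equivalence \((1)\Leftrightarrow(2)\) directly and then obtain \((1)\Leftrightarrow(3)\) for free by symmetry. Indeed, if \(P=\begin{pmatrix} O & I_n\\ I_n & O\end{pmatrix}\) is the block-swap, then \(P\) is unitary and \(P^*MP=\begin{pmatrix}\gamma & \beta^*\\ \beta & \alpha\end{pmatrix}\) has exactly the form of \(M\) with \(\alpha,\gamma\) interchanged and \(\beta\) replaced by \(\beta^*\); since congruence by a unitary preserves \(\ge 0\), applying \((1)\Leftrightarrow(2)\) to \(P^*MP\) yields precisely condition \((3)\). So the whole statement reduces to \((1)\Leftrightarrow(2)\). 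The engine is a pseudo-inverse version of the Schur-complement congruence, and the facts I would record first are the standard ones for a Hermitian \(\alpha\): the Moore--Penrose inverse \(\alpha^\dagger\) is Hermitian, \(\alpha\alpha^\dagger=\alpha^\dagger\alpha\) is the orthogonal projection onto \(\operatorname{range}(\alpha)\), and consequently the condition \((I_n-\alpha\alpha^\dagger)\beta=O\) is equivalent to \(\operatorname{range}(\beta)\subseteq\operatorname{range}(\alpha)\), which is in turn equivalent to the two identities \(\alpha\alpha^\dagger\beta=\beta\) and \(\beta^*\alpha^\dagger\alpha=\beta^*\) (the second being the adjoint of the first).

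Next I would set up the congruence. With \(L=\begin{pmatrix} I_n & -\alpha^\dagger\beta\\ O & I_n\end{pmatrix}\), so that \(L^*=\begin{pmatrix} I_n & O\\ -\beta^*\alpha^\dagger & I_n\end{pmatrix}\), a direct block multiplication using the range identities \(\alpha\alpha^\dagger\beta=\beta\) and \(\beta^*\alpha^\dagger\alpha=\beta^*\) collapses all off-diagonal blocks and gives
\begin{equation*}
L^*ML=\begin{pmatrix}\alpha & O\\ O & \gamma-\beta^*\alpha^\dagger\beta\end{pmatrix}\ .
\end{equation*}
Since \(L\) is invertible, \(M\ge 0\) if and only if \(L^*ML\ge 0\), i.e. if and only if \(\alpha\ge 0\) and the generalized Schur complement \(\gamma-\beta^*\alpha^\dagger\beta\ge 0\). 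The crucial caveat is that this factorization is only valid \emph{after} the range condition \((I_n-\alpha\alpha^\dagger)\beta=O\) is known; without it the off-diagonal blocks do not vanish. This immediately yields \((2)\Rightarrow(1)\): assuming \((2)\), the range condition holds, the factorization applies, \(L^*ML\ge0\), and \(M=(L^{-1})^*(L^*ML)L^{-1}\ge 0\).

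For \((1)\Rightarrow(2)\) the only part not supplied by the factorization is the range condition itself, which I would extract by testing \(M\ge 0\) against suitable vectors. Testing \(\begin{pmatrix} x\\0\end{pmatrix}\) gives \(x^*\alpha x\ge0\), so \(\alpha\ge0\). Then for \(x\in\ker\alpha\) and arbitrary \(y\in\C^n,\ s\in\C\), the quadratic form on \(\begin{pmatrix} sx\\ y\end{pmatrix}\) reduces (using \(\alpha x=0\)) to \(2\operatorname{Re}\!\big(\bar s\,x^*\beta y\big)+y^*\gamma y\ge0\); letting \(s\) run to infinity along the phase that makes the first term negative forces \(x^*\beta y=0\) for all \(y\), hence \(\beta^*x=0\). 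Thus \(\ker\alpha\subseteq\ker\beta^*\), equivalently \(\operatorname{range}(\beta)\subseteq\operatorname{range}(\alpha)\), which is the range condition. With it in hand the factorization applies, and from \(M\ge0\) we read off \(\gamma-\beta^*\alpha^\dagger\beta\ge0\), completing \((2)\).

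I expect the main obstacle to be the bookkeeping of the singular case: one cannot simply quote the classical invertible-\(\alpha\) Schur identity, so the argument must be ordered so that the range condition is derived before the congruence is invoked, and one must carefully justify \(\alpha\alpha^\dagger\beta=\beta\) and \(\beta^*\alpha^\dagger\alpha=\beta^*\) from the Hermitian pseudo-inverse properties so that every cross term in \(L^*ML\) genuinely cancels. Everything else is routine block algebra together with the invariance of \(\ge 0\) under congruence by invertible (here \(L\)) and unitary (here \(P\)) matrices.
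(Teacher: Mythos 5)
Your proof is correct: the block-swap unitary cleanly reduces (3) to (2); the range identities $\alpha\alpha^\dag\beta=\beta$ and $\beta^*\alpha^\dag\alpha=\beta^*$ legitimize the congruence $L^*ML=\operatorname{diag}(\alpha,\ \gamma-\beta^*\alpha^\dag\beta)$; and the scaling argument on vectors $(sx,\,y)$ with $x\in\ker\alpha$ correctly extracts the range condition from $M\ge 0$ \emph{before} the factorization is invoked, which is exactly the ordering subtlety that the singular case demands. Note that the paper itself gives no proof of this proposition --- it explicitly states it without proof, deferring to \cite{BV} and to the unpublished note \cite{Gallier} for the real symmetric case --- so your argument, the standard generalized Schur complement congruence adapted to self-adjoint complex matrices, supplies precisely the proof the paper omits, and it matches in substance what the cited source does.
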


For more details on pseudo-inverses of square matrices see \cite{Gallier1}.  Let $M = i(S^*JS-J)$, then it's clear that $M$ is self-adjoint.  A computation shows that

\begin{equation*}
M = \begin{pmatrix}
i(A^*C-C^*A) & i(A^*D-C^*B-I_n) \\ i(B^*C-D^*A+I_n) & i(B^*D-D^*B)
\end{pmatrix} :=\begin{pmatrix} \alpha & \beta \\ \beta^* & \gamma\end{pmatrix}
\end{equation*}

Based on the discussion from above, we know since $S$ is symplectic, then $A^tC$ and $B^tD$ are symmetric and $A^tD-C^tB= I_n$.  Moreover in this case, the entries of $S$ can not be all real, otherwise, we are the case already proven by \cite{F}.  Based on this proposition, for $M\ge 0$, we must have

\begin{equation*}
\begin{aligned}
&(1)\quad i(A^*C-C^*A)\ge 0 \\
&(2)\quad [I_n + (A^*C-C^*A)(A^*C-C^*A)^\dag](A^*D-C^*B-I_n) = O \\
&(3)\quad i(B^*D-D^*B) + \\
&\qquad +i(B^*C-D^*A+I_n)(A^*C-C^*A)^\dag(A^*D-C^*B-I_n) \ge 0\ .
\end{aligned}
\end{equation*}
Whether or not these conditions are equivalent to $i(S^*JS- J)\ge 0$ remains unknown.

The set of all matrices $S\in SP_{2n}(\C)$  for which the action is well defined is a subgroup of  $SP_{2n}(\C)$.   This was stated in \cite{Froese} without proof, therefore we provide a proof.

\begin{prop}\label{r1}
If $S$ and $R$ are any  $2n\times 2n$ complex matrices then $\Phi_S \circ \Phi_{R}  = \Phi_{SR}$ 
\end{prop}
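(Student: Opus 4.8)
The plan is to exploit the ``homogeneous coordinate'' description of $\Phi_S$, in which stacking $Z$ over $I_n$ converts the nonlinear map $\Phi_S$ into honest matrix multiplication, so that the composition law $\Phi_S\circ\Phi_R = \Phi_{SR}$ reduces to the associativity of matrix multiplication. Write the two matrices in block form as $S = \begin{pmatrix} A & B \\ C & D\end{pmatrix}$ and $R = \begin{pmatrix} A' & B' \\ C' & D'\end{pmatrix}$, and for a fixed $Z$ set $E = A'Z + B'$ and $F = C'Z + D'$, so that $R\begin{pmatrix} Z \\ I_n\end{pmatrix} = \begin{pmatrix} E \\ F\end{pmatrix}$ and, whenever $F$ is invertible, $\Phi_R(Z) = EF^{-1}$.

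First I would record the elementary but crucial identity
\[
\begin{pmatrix} \Phi_R(Z) \\ I_n\end{pmatrix} = \begin{pmatrix} EF^{-1} \\ I_n\end{pmatrix} = \begin{pmatrix} E \\ F\end{pmatrix}F^{-1} = \left(R\begin{pmatrix} Z \\ I_n\end{pmatrix}\right)F^{-1}\ ,
\]
which says that dehomogenizing $R\begin{pmatrix} Z\\I_n\end{pmatrix}$ and then re-homogenizing differs from the original stacked vector only by right multiplication by the invertible block $F^{-1}$. Multiplying on the left by $S$ and invoking associativity then gives $S\begin{pmatrix}\Phi_R(Z)\\ I_n\end{pmatrix} = (SR)\begin{pmatrix} Z\\I_n\end{pmatrix}F^{-1}$, which already packages the whole content of the proposition.

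Next I would read off both sides in affine coordinates. On the left, expanding $\Phi_S$ at the point $\Phi_R(Z)=EF^{-1}$ and clearing the common factor $F^{-1}$ yields
\[
\Phi_S(\Phi_R(Z)) = (A\,EF^{-1}+B)(C\,EF^{-1}+D)^{-1} = (AE+BF)(CE+DF)^{-1}\ ,
\]
while computing $SR\begin{pmatrix} Z\\I_n\end{pmatrix} = \begin{pmatrix} AE+BF \\ CE+DF\end{pmatrix}$ directly gives $\Phi_{SR}(Z) = (AE+BF)(CE+DF)^{-1}$, so the two expressions coincide.

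The one point requiring care --- and the only real obstacle, since these are partially defined maps --- is to match the domains on which each side makes sense. I would handle this with the factorization $CE + DF = (C\Phi_R(Z)+D)F$, valid once $F$ is invertible. This shows that the block $CE+DF$, whose invertibility is what makes $\Phi_{SR}(Z)$ defined, is invertible if and only if $C\Phi_R(Z)+D$, whose invertibility is what makes $\Phi_S$ defined at $\Phi_R(Z)$, is invertible. Hence, granting that $\Phi_R(Z)$ itself is defined (that is, $F$ invertible), the composite $\Phi_S(\Phi_R(Z))$ and the value $\Phi_{SR}(Z)$ are simultaneously defined and, by the computation above, equal. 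This establishes $\Phi_S\circ\Phi_R = \Phi_{SR}$ as an identity of partially defined maps and completes the proof.
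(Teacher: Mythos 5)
Your proof is correct, but it takes a different route from the paper's. The paper works entirely in affine coordinates: writing the blocks of $R$ as $E,F,G,H$, it expands $\Phi_S(\Phi_R(Z)) = \left(A(EZ+F)(GZ+H)^{-1}+B\right)\left(C(EZ+F)(GZ+H)^{-1}+D\right)^{-1}$ and then inserts the factor $(GZ+H)(GZ+H)^{-1}$ between the two parenthesized terms, which clears the inner inverses and allows the blocks to be regrouped into $\Phi_{SR}(Z)$. Your homogeneous-coordinates argument performs the same underlying algebra --- right multiplication by your invertible block $F$ is exactly the paper's inserted factor --- but it packages the computation so that the composition law is read off from associativity, $S\left(R\begin{pmatrix} Z \\ I_n\end{pmatrix}\right) = (SR)\begin{pmatrix} Z \\ I_n\end{pmatrix}$, which exposes the structural reason the identity holds: $\Phi$ is just the linear action on stacked matrices, taken modulo right multiplication by invertible $n\times n$ matrices. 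Your version also buys something the paper's does not: a correct treatment of domains. The paper's hypothesis (``for any $Z$ for which $\Phi_S(Z)$ and $\Phi_R(Z)$ are well defined'') is not actually the condition its computation uses --- what must be invertible is $C\Phi_R(Z)+D$, not $CZ+D$ --- whereas your factorization $CE+DF = (C\Phi_R(Z)+D)F$ pinpoints exactly when the two sides are simultaneously defined, granted that $\Phi_R(Z)$ is. One small caveat: as partial maps, $\Phi_{SR}$ can have strictly larger domain than $\Phi_S\circ\Phi_R$ (take $R$ invertible with $\Phi_R(Z)$ undefined at some $Z$ and $S=R^{-1}$, so that $\Phi_{SR}=\Phi_{I_{2n}}$ is everywhere defined), so the result is an equality on the domain of the composite rather than an equality of partially defined maps with identical domains; this looseness is present in the paper's formulation as well.
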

  
\begin{proof}
Let

\begin{equation*}
S = \begin{pmatrix}  A & B \\ C & D \end{pmatrix} \textrm{ and } R = \begin{pmatrix}  E & F \\ G & H \end{pmatrix}\ .
\end{equation*}
For any $Z \in \mathcal S_n,$ for which $\Phi_S(Z)$ and $\Phi_R (Z) $ are well defined, we have 

\begin{equation*}
\begin{aligned}
\Phi_S \circ\Phi_R (Z) & = \Phi_S (\Phi_R (Z)) = \Phi_S ((EZ+F)(GZ+H)^{-1}) \\ 
& = \left(A(EZ+F)(GZ+H)^{-1} + B\right)\\
&\qquad\times \left(C(EZ+F)(GZ+H)^{-1} +D \right)^{-1} \\ 
&= \left(A(EZ+F)(GZ+H)^{-1} +B\right)(GZ+H) \\
&\qquad\times (GZ+H)^{-1}\left(C(EZ+F)(GZ+H)^{-1} +D \right)^{-1} \\ 
& = \left(A(EZ+F)+B(GZ+H)\right)\\
&\qquad\times\left(C(EZ+F) +D(GZ+H)\right)^{-1}\\ 
&= \left((AE+BG)Z+(AF+BH)\right)\\
&\qquad\times\left((CE+DG)Z+ (CF+DH))\right)^{-1} \\ 
& = \Phi_{SR}(Z)\ .
\end{aligned}
\end{equation*}
This works for an arbitrary $Z$, thus completing the proof.
\end{proof}
 
This next proposition was due to \cite{F}.
\begin{prop} 
The action of $\Phi_S$ on $\mathcal S_n$ is transitive.
\end{prop}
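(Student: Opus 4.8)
The plan is to reduce transitivity to the single orbit statement that every $Z\in\mathcal S_n$ lies in the orbit of the distinguished base point $iI_n$, and then to patch two such statements together using the composition law of Proposition~\ref{r1}. This mirrors the onto-argument already carried out for $\overline{\mathcal S}_n$ at the end of the proof of Proposition~\ref{p}.

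First I would construct, for an arbitrary $Z = X+iY\in\mathcal S_n$, an explicit real symplectic matrix sending $iI_n$ to $Z$. Since $Y$ is symmetric and positive definite, the Spectral Theorem furnishes a symmetric positive definite square root $\sqrt{Y}$ together with $\sqrt{Y^{-1}}=(\sqrt{Y})^{-1}$. In analogy with the matrix ${S_Z}_-$ used in Proposition~\ref{p}, I would set
\begin{equation*}
S_Z = \begin{pmatrix} \sqrt{Y} & X\sqrt{Y^{-1}} \\ O & \sqrt{Y^{-1}}\end{pmatrix}\ .
\end{equation*}
A direct computation then gives $\Phi_{S_Z}(iI_n) = (i\sqrt{Y} + X\sqrt{Y^{-1}})(\sqrt{Y^{-1}})^{-1} = iY + X = Z$, so $S_Z$ carries $iI_n$ to $Z$.

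Next I would verify that $S_Z\in SP_{2n}(\R)$ via the block criterion recalled in the Introduction. Writing $A = \sqrt{Y}$, $B = X\sqrt{Y^{-1}}$, $C = O$, $D = \sqrt{Y^{-1}}$, one checks that $A^tC = O$ is symmetric, that $B^tD = \sqrt{Y^{-1}}\,X\,\sqrt{Y^{-1}}$ is symmetric (because $X$ and $\sqrt{Y^{-1}}$ are), and that $A^tD - C^tB = \sqrt{Y}\,\sqrt{Y^{-1}} = I_n$. Hence $S_Z$ is a genuine element of $SP_{2n}(\R)$, and by the result of \cite{F} its action preserves $\mathcal S_n$.

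Finally, given any $Z_1, Z_2\in\mathcal S_n$, I would put $S = S_{Z_2}S_{Z_1}^{-1}$, which again lies in $SP_{2n}(\R)$ since that set is a group. Applying Proposition~\ref{r1} to $S_{Z_1}^{-1}S_{Z_1} = I_{2n}$ shows $\Phi_{S_{Z_1}^{-1}} = (\Phi_{S_{Z_1}})^{-1}$, so another application of Proposition~\ref{r1} yields
\begin{equation*}
\Phi_S(Z_1) = \Phi_{S_{Z_2}}\bigl(\Phi_{S_{Z_1}^{-1}}(Z_1)\bigr) = \Phi_{S_{Z_2}}(iI_n) = Z_2\ ,
\end{equation*}
which establishes transitivity. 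The only genuinely nontrivial ingredient is the existence of the symmetric positive definite square root $\sqrt{Y}$, supplied by the Spectral Theorem; everything else is the block-matrix bookkeeping above together with the composition law, so I do not anticipate a serious obstacle.
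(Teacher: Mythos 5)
Your proposal is correct and follows essentially the same route as the paper: the identical matrix $S_Z$ with blocks $\sqrt{Y}$, $X\sqrt{Y^{-1}}$, $O$, $\sqrt{Y^{-1}}$, verified symplectic, sending $iI_n$ to $Z$. The only difference is that you make explicit the final patching step $S = S_{Z_2}S_{Z_1}^{-1}$ via Proposition~\ref{r1}, which the paper leaves implicit after establishing the single-orbit statement.
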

 
\begin{proof} 
We show that $\mathcal S_n$ has a single orbit. In other words, every $Z \in \mathcal S_n$ is an image of some $S\in SP_{2n}(\C)$.  Let $Z= X+iY \in \mathcal S_n$.  Define $S_Z$ by

\begin{equation*}
S_Z = \begin{pmatrix}  \sqrt{Y} & X \sqrt{Y^{-1}}\\ O & \sqrt{Y^{-1}} \end{pmatrix}\ .
\end{equation*}
Recall $Y$ is a symmetric positive definite matrix, therefore the Spectral Theorem guarantees that $\sqrt{Y}$ and $\sqrt{Y^{-1}}$ exist.   It is easy to verify that $S_Z^tJS_Z = J$. Therefore, $S_Z\in SP_{2n}(\C)$. Now consider $S_Z(iI_n)$.  Indeed we have 

\begin{equation*}
S_Z(iI_n) = (\sqrt{Y} iI_n + X \sqrt{Y^{-1}} ) (\sqrt{Y^{-1}})^{-1} =  X+iY =Z\,
\end{equation*}
thus showing $\mathcal{S}_n$ has a single orbit completing the proof.
\end{proof}

Since the action of 
\begin{equation*}
I_{2n} := \begin{pmatrix} I_n & O\\ O & I_n \end{pmatrix}
\end{equation*}
is same as the action of 
\begin{equation*} 
-I_{2n} = \begin{pmatrix} -I_n & O\\ O & -I_n \end{pmatrix}
\end{equation*}
we can identify them so that we can form the quotient space 

\begin{equation*}
SP_{2n}(\R)/\{I_{2n}, -I_{2n}\}
\end{equation*}
again denoted by $SP_{2n}(\R).$ This makes the action of $SP_{2n}(\R)$ on $\mathcal S_n$ bijective.  The following proposition was stated in \cite{Froese} again without proof.  Like before we present a proof for completeness of this paper.
  
\begin{prop}
The set of matrices 
\begin{equation*}
K= \{U \in SP_{2n}(\R): \Phi_U(iI_n) = iI_n\}
\end{equation*}
is precisely the subgroup of orthogonal matrices in  $SP_{2n}(\R)$. 
\end{prop}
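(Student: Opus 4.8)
The plan is to establish, for $U = \begin{pmatrix} A & B \\ C & D \end{pmatrix} \in SP_{2n}(\R)$, the chain of equivalences
\[
\Phi_U(iI_n) = iI_n \iff (D = A \text{ and } C = -B) \iff UJ = JU \iff U \text{ is orthogonal}.
\]
First I would record that, since $U$ is real symplectic and $iI_n \in \mathcal S_n$, the result of \cite{F} guarantees that $C(iI_n)+D = iC+D$ is invertible, so $\Phi_U(iI_n) = (iA+B)(iC+D)^{-1}$ is well defined. The fixed-point equation $\Phi_U(iI_n)=iI_n$ can then be rewritten as $iA+B = iI_n(iC+D) = -C + iD$; separating real and imaginary parts (legitimate because $A,B,C,D$ are real matrices) shows this is equivalent to the pair of identities $C=-B$ and $D=A$. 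A short block multiplication then verifies that these two identities together are precisely the relation $UJ=JU$.

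Next I would prove that, within $SP_{2n}(\R)$, commuting with $J$ is the same as being orthogonal. If $U$ is orthogonal then $U^{-1}=U^t$, and combining this with the symplectic relation $U^tJU=J$ yields $U^{-1}JU=J$, i.e.\ $JU=UJ$. Conversely, if $UJ=JU$, then substituting into $U^tJU=J$ gives $U^tUJ = U^tJU = J$, and since $J$ is invertible we conclude $U^tU=I_{2n}$, so $U$ is orthogonal. This closes the chain and proves that $K$ coincides with the set of orthogonal matrices in $SP_{2n}(\R)$. That this set is a subgroup is automatic: it is the stabilizer of $iI_n$ under the group action $\Phi$ (using Proposition \ref{r1}), or equivalently the intersection of two subgroups of $\GL_{2n}(\R)$.

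I expect no serious obstacle here: once the intermediate characterization \emph{``$U$ commutes with $J$''} is isolated, every step is an elementary identity. The only points that require a moment's care are the invertibility of $iC+D$ in the forward direction (supplied by \cite{F}) and, in the reverse direction, checking that $\Phi_U(iI_n)$ actually returns $iI_n$ rather than being merely well defined. The latter follows because $D=A$ and $C=-B$ give $iA+B = i(A-iB) = i(iC+D)$, whence $\Phi_U(iI_n)=i(iC+D)(iC+D)^{-1}=iI_n$. It may also be worth remarking that the resulting block form $\begin{pmatrix} A & B \\ -B & A\end{pmatrix}$, subject to $A^tA+B^tB=I_n$ with $A^tB$ symmetric, is exactly the image of the unitary group under $A+iB\mapsto \begin{pmatrix} A & B \\ -B & A\end{pmatrix}$, so that $K$ is isomorphic to $U(n)$; this is an optional observation and not needed for the statement.
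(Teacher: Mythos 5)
Your proof is correct, and it takes a route that differs from the paper's in a way worth recording. Both arguments open identically: well-definedness of $\Phi_U(iI_n)$ is borrowed from the real symplectic theory of \cite{F}, and separating real and imaginary parts of $iA+B=iI_n(iC+D)$ yields $D=A$ and $C=-B$. From there the paper proceeds by direct computation: it verifies $UU^t=I_{2n}$ from the block identities $AA^t+BB^t=I_n$ and symmetry of $AB^t$, and then establishes the subgroup property by hand, checking $U^{-1}\in K$ via an explicit transpose computation and closure under products via Proposition \ref{r1}. What the paper never does is check the reverse inclusion, namely that every orthogonal matrix in $SP_{2n}(\R)$ actually fixes $iI_n$; its argument yields only that $K$ is a subgroup contained in the orthogonal symplectic matrices, which is strictly less than the word ``precisely'' in the statement demands. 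Your chain of equivalences through the intermediate condition $UJ=JU$ closes exactly this gap: the block form $D=A$, $C=-B$ is equivalent to $UJ=JU$; given $UJ=JU$ the symplectic relation $U^tJU=J$ becomes $U^tUJ=J$, forcing $U^tU=I_{2n}$, while conversely orthogonality turns $U^tJU=J$ into $JU=UJ$; and the block form feeds back into $\Phi_U(iI_n)=i(iC+D)(iC+D)^{-1}=iI_n$, where you correctly flag that invertibility of $iC+D$ is needed once more. Since every link is an equivalence, you obtain the full set equality, and the subgroup property comes for free (stabilizer of a point under the action, or intersection of two subgroups of $\GL_{2n}(\R)$) rather than by computation. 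The identification of $K$ with the unitary group $U(n)$ via $A+iB\mapsto\begin{pmatrix} A & B\\ -B & A\end{pmatrix}$ is a correct bonus observation, not needed for the statement, but it explains conceptually why this stabilizer is a group at all.
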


\begin{proof}
Suppose 
\begin{equation*}
U = \begin{pmatrix}  A & B \\ C & D \end{pmatrix}  \in SP_{2n}(\R)
\end{equation*}
such that $\Phi_U(iI_n) = iI_n$ then $(A iI_n +B)(C iI_n +D)^{-1} =  iI_n$.  This implies that $iA+B =  iI_n (iC+D)$, therefore it follows that $B= -C$ and $A=D$.   Hence $ U$ and $U^t$ are of the forms

\begin{equation*}
U= \begin{pmatrix}  A & B \\ -B & A \end{pmatrix} \textrm{ and }U^t = \begin{pmatrix}  A^t & -B^t \\ B^t & A^t \end{pmatrix}\ .
\end{equation*} 
Thus we have

\begin{equation*}  		
\begin{aligned}
UU^t & = \begin{pmatrix}  A & B \\ -B & A \end{pmatrix} \begin{pmatrix}  A^t & -B^t \\ B^t & A^t \end{pmatrix} \\ 
&= \begin{pmatrix}  AA^t +BB^t & -AB^t+BA^t \\ -BA^t+AB^t & BB^t +AA^t \end{pmatrix} = I_{2n}
\end{aligned}
\end{equation*}
since $AA^t +BB^t = I_n$ and $-AB^t$ is symmetric.   This shows that $ U^t= U^{-1}$, hence $U$ is orthogonal.  Moreover if
  	 
\begin{equation*}  	 
U = \begin{pmatrix}  A & B \\ -B & A \end{pmatrix}\in K\ ,
\end{equation*}
then $U^{-1} \in K$.   This is true since

\begin{equation*}
\begin{aligned}
\Phi_{U^{-1}}(iI_n) & = \begin{pmatrix}  A^t & -B^t \\ B^t & A^t \end{pmatrix}(iI_n) = \left( A^t (iI_n)  -B^t \right) \left( B^t (iI_n) + A^t \right)^{-1} \\
&= ( Ai -B )^t \left[( B(iI_n) + A)^t \right]^{-1} = \left[(Bi + A)^{-1}(Ai -B)\right]^t \\ 
&= \left[i(Bi + A)^{-1}(A + Bi) \right]^t =  iI_n\ .
\end{aligned}
\end{equation*}
In addition, if $U$ and $V$ are both in $K$ then by Proposition \ref{r1} we have $\Phi_{UV^{-1}}(iI_n)  =\Phi_U(\Phi_{V^{-1}}(iI_n))  = \Phi_U(iI_n)) = iI_n$.  This shows that $K$ is a subgroup.  Thus the result follows.
\end{proof}
  	
Since $K$ is the subgroup of orthogonal matrices in $SP_{2n}(\R)$ general theory tells us that $K$ is a normal subgroup.  Therefore we can consider the quotient space 
\begin{equation*}
SP_{2n}(\R)/K = \{SK : S\in SP_{2n}(\R)\}
\end{equation*} 
where $SK= \{SU : U \in K\}$ is a coset of $K$.   As shown in \cite{F}, this space is metrizable with a metric given by

\begin{equation*}
d^{S}(S_1K, S_2K) = 2 \ln \| S_1^{-1}S_2\|\ .
\end{equation*}
Notice that the metric is independent of the choice of representative in the equivalence class $SK$.
 
Define a map $\Psi$ in the following way

\begin{equation}\label{psi_isometry}
\Psi: SP_{2n}(\R)/K\rightarrow \mathcal{S}_n \textrm{ via } SK\mapsto \Phi_S(iI_n)\ .
\end{equation}
Since $\Phi_S$, when acting $SP_{2n}(\R)$, is a bijective map, the map $\Psi$ is also bijective.   The following theorem was proved in \cite{Froese}.
  		
\begin{theorem}	
The map $\Psi$, defined in equation (\ref{psi_isometry}), is an isometry for the metrics $d^{S}$ and $d_{\infty}$, that is 

\begin{equation*}
d^{S}(S_1 K, S_2 K) = d_{\infty}(S_1(iI_n), S_2(iI_n))\ .
\end{equation*} 
In particular, 

\begin{equation*} 
d_{\infty}(Z_1, Z_2)  = 2 \ln \|{S_{Z_1}} ^{-1} S_{Z_2}\|\ .
\end{equation*} 
\end{theorem}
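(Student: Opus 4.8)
The plan is to realize $(\mathcal S_n,d_\infty)$ as the symmetric space $SP_{2n}(\R)/K$ and prove the statement in three stages: first that every $\Phi_S$ with $S\in SP_{2n}(\R)$ is a $d_\infty$-isometry, then reduce the whole theorem to a single ``distance from the base point $iI_n$'' formula, and finally compute that formula explicitly on the diagonal.

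First I would show that $\Phi_S$ is a $d_\infty$-isometry for every $S=\begin{pmatrix} A & B \\ C & D\end{pmatrix}\in SP_{2n}(\R)$. Since $d_\infty$ is obtained by integrating $F_{Z(t)}(\dot Z(t))$, it suffices to check the infinitesimal invariance $F_{\Phi_S(Z)}(D\Phi_S(Z)[W])=F_Z(W)$. Writing $V=CZ+D$, a direct differentiation together with the symplectic relations $AB^t=BA^t$, $CD^t=DC^t$, $AD^t-BC^t=I_n$ yields the congruence formula $D\Phi_S(Z)[W]=(V^{-1})^tW V^{-1}$, while the same computation as in Proposition \ref{p} (with the symplectic sign) gives $\operatorname{Im}\Phi_S(Z)=(V^{-1})^*\operatorname{Im}(Z)V^{-1}$. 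The crucial point is that $\operatorname{Im}\Phi_S(Z)$ is a \emph{real} symmetric matrix; so, with $Y=\operatorname{Im}(Z)$, $\tilde Y=\operatorname{Im}\Phi_S(Z)$ and $G=V^{-1}$, the matrix $P=Y^{1/2}G\tilde Y^{-1/2}$ satisfies $P^*P=\tilde Y^{-1/2}G^*YG\tilde Y^{-1/2}=I$, i.e. $P$ is unitary. Because $\tilde Y^{-1/2}$ is real symmetric, $\tilde Y^{-1/2}G^t=(G\tilde Y^{-1/2})^t=P^tY^{-1/2}$, whence $\tilde Y^{-1/2}D\Phi_S(Z)[W]\tilde Y^{-1/2}=P^t(Y^{-1/2}WY^{-1/2})P$. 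As $P$ and $P^t$ are unitary the operator norm is unchanged, so $F_{\Phi_S(Z)}(D\Phi_S(Z)[W])=F_Z(W)$ and $\Phi_S$ preserves $d_\infty$.

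Next I would use Proposition \ref{r1} and this isometry to reduce the theorem to the single formula $d_\infty(iI_n,W)=2\ln\|S_W\|$. Given $S_1,S_2$, set $Z_j=\Phi_{S_j}(iI_n)$; applying $\Phi_{S_1^{-1}}$ and $\Phi_{S_1^{-1}}\circ\Phi_{S_1}=\Phi_{I_{2n}}$ gives $d_\infty(Z_1,Z_2)=d_\infty(iI_n,W)$ with $W=\Phi_{S_1^{-1}S_2}(iI_n)$. Since the action is bijective, $S_WK=(S_1^{-1}S_2)K$, and as the operator norm is invariant under multiplication by the orthogonal matrices of $K$ we get $\|S_W\|=\|S_1^{-1}S_2\|$. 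Thus the base-point formula yields $d_\infty(Z_1,Z_2)=2\ln\|S_1^{-1}S_2\|=d^S(S_1K,S_2K)$, and taking $S_j=S_{Z_j}$ produces the ``in particular'' statement. For the base-point formula itself I note that both $W\mapsto d_\infty(iI_n,W)$ and $W\mapsto2\ln\|S_W\|$ are invariant under $W\mapsto\Phi_U(W)$, $U\in K$ (the first by the isometry, since $\Phi_U(iI_n)=iI_n$; the second since $S_{\Phi_U(W)}K=US_WK$ with $K$ orthogonal). Using the Cartan ($KAK$) decomposition of $SP_{2n}(\R)$ — equivalently, diagonalizing the positive part of $S_W$ by an element of $K$ — I may therefore assume $W=i\Lambda$, $\Lambda=\operatorname{diag}(\lambda_1,\dots,\lambda_n)$ with $\lambda_j>0$. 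Then $S_{i\Lambda}=\operatorname{diag}(\sqrt\Lambda,\sqrt{\Lambda^{-1}})$ has operator norm $\max_j\max(\sqrt{\lambda_j},1/\sqrt{\lambda_j})$, so $2\ln\|S_{i\Lambda}\|=\max_j|\ln\lambda_j|$. The upper bound $d_\infty(iI_n,i\Lambda)\le\max_j|\ln\lambda_j|$ follows from the explicit path $Z(t)=i\operatorname{diag}(\lambda_j^{t})$, along which $F_{Z(t)}(\dot Z(t))=\|\operatorname{diag}(\ln\lambda_j)\|=\max_j|\ln\lambda_j|$ is constant. For the matching lower bound I would use that any path $Z(t)=X(t)+iY(t)$ satisfies $F_{Z(t)}(\dot Z(t))=\|Y^{-1/2}\dot ZY^{-1/2}\|\ge\|Y^{-1/2}\dot YY^{-1/2}\|$ (from $\|P+iQ\|\ge\|Q\|$ for Hermitian $P,Q$), and that for a fixed unit vector $u$ the scalar $\phi_u(t)=\ln\langle u,Y(t)u\rangle$ obeys $|\dot\phi_u|\le\|Y^{-1/2}\dot YY^{-1/2}\|$; integrating and taking $u=e_j$ gives length $\ge|\ln\lambda_j|$ for each $j$, hence length $\ge\max_j|\ln\lambda_j|$.

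The routine parts are the congruence formula for $D\Phi_S$ and the algebra of the reduction. The two genuinely delicate points are, first, the observation that $\operatorname{Im}\Phi_S(Z)$ is real, which is exactly what collapses the mismatched transpose and adjoint congruences into conjugation by a single unitary $P$ and thereby makes the operator norm invariant; and second — the main obstacle — the sharp lower bound, i.e. proving that the explicit diagonal path is length-minimizing. The upper bound and the algebraic identities only deliver ``$\le$''; pinning down the exact distance requires the calibration by the functions $\phi_u$, and it is precisely here that the choice of the operator norm (rather than some other matrix norm) in $F_Z$ is essential, through the inequalities $\|P+iQ\|\ge\|Q\|$ and $|\dot\phi_u|\le\|Y^{-1/2}\dot YY^{-1/2}\|$.
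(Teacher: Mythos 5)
Your proposal is correct, but note that the paper itself contains no proof of this theorem: it is quoted from \cite{Froese} and explicitly skipped ``for brevity,'' so the comparison is with the cited source rather than with anything in the paper's text. Your three-stage argument is sound at every step: (i) the congruence formula $D\Phi_S(Z)[W]=(V^{-1})^tWV^{-1}$ with $V=CZ+D$, the identity $\operatorname{Im}\Phi_S(Z)=(V^{-1})^*\operatorname{Im}(Z)V^{-1}$, and the unitarity of $P=Y^{1/2}V^{-1}\tilde Y^{-1/2}$ do give infinitesimal invariance of $F$, and the point you single out---that $\operatorname{Im}\Phi_S(Z)$ is \emph{real} symmetric (because $\Phi_S(Z)$ is complex symmetric), so that $P^t$ is again unitary---is exactly what reconciles the transpose congruence on $W$ with the adjoint congruence on $Y$; (ii) the reduction via Proposition \ref{r1} and the stabilizer proposition to the base-point identity $d_\infty(iI_n,W)=2\ln\|S_W\|$ is clean, since right multiplication of $S_W$ by an orthogonal element of $K$ leaves the operator norm unchanged; (iii) the diagonal computation is correct, the path $i\operatorname{diag}(\lambda_j^t)$ giving the upper bound $\max_j|\ln\lambda_j|$ and the calibration $F_{Z}(\dot Z)\ge\|Y^{-1/2}\dot YY^{-1/2}\|\ge\bigl|\tfrac{d}{dt}\ln\langle e_j,Y(t)e_j\rangle\bigr|$ giving the matching lower bound; this last inequality is indeed the only genuinely analytic point and the only place the choice of operator norm matters. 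This is essentially the same skeleton as the argument in \cite{Froese} (isometric action, reduction to the orbit of $iI_n$, explicit geodesic computation), so you have in effect supplied the proof the paper omits. The one ingredient you import without justification is the Cartan ($KAK$) decomposition of $SP_{2n}(\R)$ relative to $K$; it is standard, but in a self-contained write-up you should either cite it or derive it at the same level as the rest of your argument: the polar factors of a symplectic matrix are symplectic, and a symmetric positive definite symplectic matrix is diagonalized by an orthogonal symplectic matrix, which yields $S_W=U_1DU_2$ with $U_1,U_2\in K$, $D=\operatorname{diag}(\sqrt{\Lambda},\sqrt{\Lambda^{-1}})$, and hence $\Phi_{U_1^{-1}}(W)=i\Lambda$ as your reduction requires.
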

We skip the proof for brevity.  Based on this result we have the following theorem.
  
\begin{theorem} 
If $S\in SP_{2n}(\C) $ is purely imaginary then the map $\Phi_S$ is an isometry between the metric spaces $(\mathcal S_n,d_{\infty})$ and $(\overline{\mathcal S}_n, d_-)$.
\end{theorem}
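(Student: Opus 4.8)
The plan is to exploit the factorization $S = iQ$ with $Q \in A(2n,\R)$ together with the identity $\Phi_{iQ} = \Phi_Q$ established in the proof of the theorem on purely imaginary symplectic matrices, so that it suffices to prove that $\Phi_Q \colon (\mathcal S_n, d_\infty) \to (\overline{\mathcal S}_n, d_-)$ is an isometry for an arbitrary real antisymplectic $Q$. I would realize $\Phi_Q$ as a composition of two maps that are individually isometries: a fixed ``reflection'' coming from a distinguished antisymplectic matrix, and a genuine real symplectic transformation. Concretely, fix $Q_0 = \begin{pmatrix} -I_n & O \\ O & I_n \end{pmatrix}$, which lies in $A(2n,\R)$ and satisfies $\Phi_{Q_0}(Z) = -Z$. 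Since $Q_0$ and $Q$ are both antisymplectic and the product of two antisymplectic matrices is symplectic, $U := Q_0^{-1}Q \in SP_{2n}(\R)$, and Proposition \ref{r1} gives $\Phi_Q = \Phi_{Q_0 U} = \Phi_{Q_0}\circ\Phi_U$.

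The first building block is that $\Phi_U$ is an isometry of $(\mathcal S_n, d_\infty)$ for every $U \in SP_{2n}(\R)$. I would deduce this from the isometry theorem for $\Psi$: writing $Z_j = \Phi_{S_{Z_j}}(iI_n)$, Proposition \ref{r1} gives $\Phi_U(Z_j) = \Phi_{US_{Z_j}}(iI_n)$, so $US_{Z_j}$ and $S_{\Phi_U(Z_j)}$ send $iI_n$ to the same point and hence lie in the same coset $S_{\Phi_U(Z_j)}K = US_{Z_j}K$; left-invariance of $\|S_1^{-1}S_2\|$ then yields $d_\infty(\Phi_U(Z_1),\Phi_U(Z_2)) = 2\ln\|(US_{Z_1})^{-1}US_{Z_2}\| = 2\ln\|S_{Z_1}^{-1}S_{Z_2}\| = d_\infty(Z_1,Z_2)$.

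The second building block is that $\Phi_{Q_0}\colon Z \mapsto -Z$ is an isometry from $(\mathcal S_n, d_\infty)$ to $(\overline{\mathcal S}_n, d_-)$. By definition of $d_-$ we have $d_-(-Z_1,-Z_2) = d_\infty(-\overline{Z_1}, -\overline{Z_2})$, so the claim reduces to showing that the involution $g(Z) = -\overline Z$ preserves $d_\infty$ on $\mathcal S_n$. I would verify this at the level of the Finsler norm: $g$ fixes $Y = \operatorname{Im} Z$ and has differential $W \mapsto -\overline W$, and since $Y^{-1/2}$ is real while both negation and complex conjugation preserve the operator norm, $F_{g(Z)}(-\overline W) = \|Y^{-1/2}\overline W Y^{-1/2}\| = \|\overline{Y^{-1/2}WY^{-1/2}}\| = F_Z(W)$; preserving the integrand along every path forces $g$, and hence $\Phi_{Q_0}$, to preserve the relevant distances.

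Assembling the two blocks, $\Phi_S = \Phi_Q = \Phi_{Q_0}\circ\Phi_U$ is a composition of isometries, namely $\Phi_U\colon(\mathcal S_n,d_\infty)\to(\mathcal S_n,d_\infty)$ followed by $\Phi_{Q_0}\colon(\mathcal S_n,d_\infty)\to(\overline{\mathcal S}_n,d_-)$, hence an isometry; bijectivity follows from the surjectivity established in Proposition \ref{p} together with the fact that distance-preserving maps are injective. The main obstacle is isolating a clean base case: one must choose the reference antisymplectic matrix $Q_0$ so that $\Phi_{Q_0}$ is transparently an isometry (here $Z\mapsto -Z$) and then carefully check the Finsler-norm invariance of $Z\mapsto -\overline Z$, keeping track of the two different metrics $d_\infty$ and $d_-$ and of the direction of each map; the real-symplectic isometry statement, though standard, must also be extracted correctly from the coset metric $d^{S}$.
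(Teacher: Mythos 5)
Your proposal is correct, but it takes a genuinely different route from the paper's. Both proofs share the same opening reduction---write $S=iQ$ with $Q\in A(2n,\R)$, note $\Phi_{iQ}=\Phi_Q$---and both ultimately rest on the theorem that $\Psi$ is an isometry for $d^{S}$ and $d_{\infty}$. From there the paper runs a single algebraic chain: it uses the identity $\overline{\Phi_Q(Z)}=\Phi_Q(\overline{Z})$ (valid since $Q$ is real, though used without comment) to turn $d_-$ into $d_{\infty}$, writes $\overline{Z}={S_Z}_-(iI_n)$ with the explicit antisymplectic matrices ${S_Z}_-=S_ZI_-$, applies the coset-metric formula to the real symplectic products $Q{S_Z}_-$, and finishes with unitary invariance of the norm under $I_-$. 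You instead factor on the other side: $Q=Q_0U$ with $Q_0=I_-$ (so $\Phi_{Q_0}(Z)=-Z$) and $U=Q_0^{-1}Q\in SP_{2n}(\R)$, splitting the statement into two lemmas---(i) every real symplectic $U$ acts as a $d_{\infty}$-isometry, which you correctly extract from the $\Psi$ theorem by left cancellation in $2\ln\|(US_{Z_1})^{-1}US_{Z_2}\|=2\ln\|S_{Z_1}^{-1}S_{Z_2}\|$, and (ii) $Z\mapsto -Z$ is an isometry from $(\mathcal S_n,d_{\infty})$ to $(\overline{\mathcal S}_n,d_-)$, which you reduce to invariance of the Finsler norm under the involution $Z\mapsto-\overline{Z}$, a direct infinitesimal computation with no counterpart in the paper (it works because $Y^{-1/2}$ is real and the operator norm is preserved by negation and entrywise conjugation). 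All steps check out, including the needed facts that $Q_0\in A(2n,\R)$, that products and inverses of antisymplectic matrices behave as claimed, and that the composition law applies since the relevant denominators are invertible. Your modular version buys two reusable statements (the isometric action of $SP_{2n}(\R)$ made explicit, and a metric-level explanation of why $d_-$ is the natural metric on $\overline{\mathcal S}_n$) while avoiding both the conjugation identity and the explicit matrices ${S_Z}_-$; the paper's version buys a shorter computation that never leaves the coset-metric formalism and needs no differentiation. Your closing remark on bijectivity (onto-ness from Proposition \ref{p}, injectivity because distance-preserving maps are injective) is a harmless addition beyond what the paper proves.
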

  	
\begin{proof}
Let $S = iQ, $ where 

\begin{equation*}
Q = \begin{pmatrix}  A & B \\ C & D \end{pmatrix}\ .
\end{equation*}
Since $S^tJS = J$ we have $Q^tJQ = -J$, hence $Q \in A(2n, \R)$.   For any $Z= X+iY \in \mathcal S_n$ we have $\Phi_{S_Z}(iI_n)= S_Z(iI_n)= Z$.  Let 

\begin{equation*}
{S_Z}_- = \begin{pmatrix} - \sqrt{Y} & X \sqrt{Y^{-1}}\\ O & \sqrt{Y^{-1}} \end{pmatrix} \textrm{ and } I_- = \begin{pmatrix}  -I_n & O\\ O & I_n \end{pmatrix}\ .
\end{equation*}
Then  ${S_Z}_- = S_Z I_-$ and ${S_Z}_-^tJ{S_Z}_- = -J$ implying that $ {S_Z}_- \in A(2n,\R)$.  Since both $Q$ and ${S_Z}_-$ are in $A(2n, \R)$ this implies that  $Q{S_Z}_-$ is in $SP_{2n}(\R)$.   Notice that $ {S_Z}_-(iI_n)= \overline{Z}$.  Therefore we have		

\begin{equation*}  	    	  	 
\begin{aligned}
d_-(\Phi_S(Z), \Phi_S(W)) & =  d_-(\Phi_{iQ}(Z),\Phi_{iQ}(W) =  d_-(\Phi_Q(Z),\Phi_Q (W)) \\ 
& = d_{\infty}(\overline{\Phi_Q (Z)}, \overline{\Phi_Q (W)}) = d_{\infty}(\Phi_Q(\overline{Z}),\Phi_Q (\overline{W}))\\ 
& = d_{\infty}(Q{S_Z}_-(iI_n), Q{S_W}_-(iI_n))\\ 
&= 2\ln\|(Q {S_Z}_-)^{-1} Q{S_W}_- \| = 2\ln\|{S_Z}_-^{-1}{S_W}_- \|\\ 
&=  2\ln\|I_- {S_Z} ^{-1}{S_W} I_- \| =   2\ln\|{S_Z} ^{-1} {S_W}\| \\ 
&=d_{\infty}(Z, W)
\end{aligned}
\end{equation*}
since $I_-$ is a unitary.  Thus this completes the proof.
\end{proof}
  
Unfortunately in general, the map $\Phi_S$ is not an isometry.  In fact we have the following theorem.   
   	
\begin{theorem}
If the matrix $S$ is simplectically similar to 

\begin{equation*}
R = \begin{pmatrix}  I_n & Z\\ I_n & 0 \end{pmatrix} \in SP_{2n} (\C)
\end{equation*} 
where $ Z\in \mathcal S_n$, that is $S = PRP^{-1} $ for some $P\in SP_{2n} (\R) $ then there exists a $\delta$ with $0<\delta<1$ such that for all $Z_1,Z_2\in\mathcal{S}_n$,

\begin{equation*}
d_{\infty}(S(Z_1), S(Z_2)) \leq \delta d_{\infty}(Z_1, Z_2)\ .
\end{equation*}
\end{theorem}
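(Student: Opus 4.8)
The plan is to strip off the real symplectic conjugation first, reducing the whole statement to a single contraction estimate for $\Phi_R$, and then to read that estimate off the infinitesimal (Finsler) behaviour of $\Phi_R$. By Proposition \ref{r1} applied twice, $\Phi_S = \Phi_P \circ \Phi_R \circ \Phi_{P^{-1}}$ with $\Phi_{P^{-1}} = \Phi_P^{-1}$. Since $P \in SP_{2n}(\R)$, the map $\Phi_P$ is a bijective isometry of $(\mathcal{S}_n, d_\infty)$: this follows from the isometry theorem above together with the left invariance of $d^{S}$, because $d^{S}(PS_1K, PS_2K) = 2\ln\|(PS_1)^{-1}PS_2\| = 2\ln\|S_1^{-1}S_2\| = d^{S}(S_1K, S_2K)$. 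Writing $W_j = \Phi_{P^{-1}}(Z_j)$ we therefore get $d_\infty(S(Z_1), S(Z_2)) = d_\infty(R(W_1), R(W_2))$ and $d_\infty(Z_1, Z_2) = d_\infty(W_1, W_2)$, so it suffices to produce a single $\delta \in (0,1)$, depending only on $R$, with $d_\infty(R(W_1), R(W_2)) \le \delta\, d_\infty(W_1, W_2)$ for all $W_1, W_2 \in \mathcal{S}_n$.

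Because $d_\infty$ is the path metric of the Finsler norm $F_Z$, I would obtain this global estimate by integrating a pointwise one: if $F_{R(W)}(D\Phi_R|_W(V)) \le \delta\, F_W(V)$ for every $W \in \mathcal{S}_n$ and every symmetric tangent vector $V$, then applying this along a path $W(t)$ and passing to the infimum over paths gives the claim. A short differentiation of $\Phi_R(W) = (AW+B)(CW+D)^{-1}$, using the symplectic relations, yields the classical automorphy identity $D\Phi_R|_W(V) = F^{-t} V F^{-1}$ with $F = CW + D$, while the computation in the classification theorem above gives $\operatorname{Im}(R(W)) = F^{-*} G F^{-1}$ where $G = \tfrac{1}{2i}(F^*E - E^*F)$ and $E = AW + B$. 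Substituting $V = Y^{1/2} U Y^{1/2}$ with $Y = \operatorname{Im}(W)$ (so $F_W(V) = \|U\|$) and using $F^{-1}\operatorname{Im}(R(W))^{-1}F^{-*} = G^{-1}$, the ratio $F_{R(W)}(D\Phi_R(V))/F_W(V)$ is bounded by $\|Y^{1/2} G^{-1} Y^{1/2}\|$. Thus the theorem reduces to the single inequality
\begin{equation*}
\delta := \sup_{W \in \mathcal{S}_n}\; \bigl\| Y^{1/2} G(W)^{-1} Y^{1/2} \bigr\| \;<\; 1 ,
\end{equation*}
which is consistent with the real case, where $G = Y$ forces the supremum to equal $1$ and recovers the isometry.

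To handle this supremum I would analyse $G$ through the self-adjoint matrix $K = i(R^*JR - J)$ from the classification theorem. Using $(W^*\ I_n) J \begin{pmatrix} W \\ I_n \end{pmatrix} = W^* - W = -2iY$, a direct computation rewrites $G = Y + \tfrac12 (W^*\ I_n) K \begin{pmatrix} W \\ I_n \end{pmatrix}$. The point is that for this particular $R$ the quadratic defect $\tfrac12 (W^*\ I_n) K \begin{pmatrix} W \\ I_n \end{pmatrix}$ should be bounded below by a fixed positive multiple of $Y$, uniformly as $W$ ranges over $\mathcal{S}_n$; if one can show $G \ge (1+c)Y$ for a single $c > 0$, then $\|Y^{1/2}G^{-1}Y^{1/2}\| \le (1+c)^{-1}$ and we may take $\delta = (1+c)^{-1} < 1$. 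Here the explicit block structure of $R$ and the eigenvalues of $K$ enter, and this is where the genuine complexity of $R$ (its failure to be real symplectic) converts the $\le 1$ of the isometry case into a strict contraction.

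I expect this last uniformity to be the main obstacle. Establishing $G \ge (1+c)Y$ with a constant independent of $W$ is a statement about the behaviour of $R$ near the boundary and near infinity of $\mathcal{S}_n$, where both $G$ and $Y$ degenerate and where the ratio of the $K$-defect to $Y$ is most delicate to control. Conceptually this is the assertion that $\Phi_R$ carries $\mathcal{S}_n$ into a relatively compact subset (equivalently, strictly inside the bounded realization obtained by the Cayley transform), so that an Earle--Hamilton / Schwarz--Pick mechanism forces a uniform contraction; I would use this geometric picture to pin down the constant $c$ and then verify it directly from the entries of $R$ and the spectrum of $K$.
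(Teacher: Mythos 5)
Your reduction is sound as far as it goes: the conjugation step, the derivative formula $D\Phi_R|_W(V)=F^{-t}VF^{-1}$, and the bound of the local dilatation by $\|Y^{1/2}G(W)^{-1}Y^{1/2}\|$ are all correct, and this is essentially the same framework as the paper's own proof (which works with $\Phi_R$ directly rather than through the derivative). The genuine gap is the step you yourself flagged: the uniform estimate $\sup_{W}\|Y^{1/2}G(W)^{-1}Y^{1/2}\|<1$ is never proved, and in fact it is false, so no refinement of the Earle--Hamilton idea can close it. Note first that the printed $R$ gives $\Phi_R(W)=(W+Z)W^{-1}$, which does not even preserve $\mathcal{S}_n$ (for $n=1$, $Z=i$, $W=it$ the image $1+1/t$ is real); the paper's proof tacitly reads $R$ as the translation matrix $\begin{pmatrix} I_n & Z\\ O & I_n\end{pmatrix}$, so that $\Phi_R(W)=W+Z$. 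With that reading $F=I_n$ and $G(W)=\operatorname{Im}(W)+\operatorname{Im}(Z)$, so your quantity is $\|W_2^{1/2}(W_2+\operatorname{Im}(Z))^{-1}W_2^{1/2}\|$ with $W_2=\operatorname{Im}(W)$, and taking $W_2=tI_n$ with $t\to\infty$ this tends to $1$; the supremum is exactly $1$. The degeneration is visible in the metric itself: for $n=1$ and $Z=iy$ one has $d_\infty(it+Z,\,2it+Z)/d_\infty(it,\,2it)=\ln\bigl(\tfrac{2t+y}{t+y}\bigr)/\ln 2\to 1$, so no single $\delta<1$ can work for all pairs. Your Earle--Hamilton picture fails for the same reason: $\Phi_R(\mathcal{S}_n)=Z+\mathcal{S}_n$ is not relatively compact in the bounded (Cayley) realization of $\mathcal{S}_n$ --- it accumulates at the boundary point corresponding to infinity --- so the hypothesis of that theorem is not met.

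You should also know that the paper's own proof stumbles at exactly this point, only less visibly: it proves the pointwise claim $F_{W+Z}(\gamma)\le\delta\,F_W(\gamma)$ with $\delta=\|(W_2+Y)^{-1/2}W_2^{1/2}\|$, a quantity that depends on $W_2=\operatorname{Im}(W)$, and then integrates along a path treating $\delta$ as a constant, which it is not. What that argument actually establishes is the non-uniform statement $d_\infty(R(Z_1),R(Z_2))<d_\infty(Z_1,Z_2)$ for $Z_1\ne Z_2$ (equivalently, a uniform $\delta<1$ on any region where $\operatorname{Im}(W)$ stays bounded above), not the theorem as stated. So your instinct to isolate uniformity as the crux was exactly right; the correct resolution is not a cleverer compactness argument but the recognition that the uniform claim fails, and your formulation of the local contraction factor is the cleanest way to see why.
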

  
\begin{proof}
Claim: for $Z= X+iY, W=W_1 +i W_2 \in \mathcal S_n$ 

\begin{equation*}
F_{W+Z}(\gamma ) < \delta F_W(\gamma)
\end{equation*}
for some $0 < \delta < 1$. Indeed we have 

\begin{equation*}
\begin{aligned}
F_{W+Z}(\gamma) & = F_{(W_1 + X )+i(W_2+Y)}(\gamma ) = \| (W_2+Y)^{-\frac{1}{2}} \gamma (W_2+Y)^{-\frac{1}{2}}\| \\ 
& = \| (W_2+Y)^{-\frac{1}{2}} W_2^{\frac{1}{2}}W_2^{-\frac{1}{2}}\gamma W_2^{-\frac{1}{2}}W_2^{\frac{1}{2}}(W_2+Y)^{-\frac{1}{2}} \| \\ 
&\le \|(W_2+Y)^{-\frac{1}{2}} W_2^{\frac{1}{2}} \|^2 \| (W_2^{-\frac{1}{2}} \gamma  W_2^{-\frac{1}{2}}\|\ .
\end{aligned}
\end{equation*}
Since $Y>0$, we have  $W_2^{\frac{1}{2}} < (W_2+Y)^{\frac{1}{2}}$ and $ (W_2+Y)^{-\frac{1}{2}}>0$.  It follows

\begin{equation*}
(W_2+Y)^{-\frac{1}{2}} W_2^{\frac{1}{2}} <  (W_2+Y)^{-\frac{1}{2}}(W_2+Y)^{\frac{1}{2}} = I_n\ .
\end{equation*} 
Therefore we have $\|(W_2+Y)^{-\frac{1}{2}} W_2^{\frac{1}{2}}\| < 1$. Thus we can choose 

\begin{equation*}
\delta = \| (W_2+Y)^{-\frac{1}{2}} W_2^{\frac{1}{2}}\|
\end{equation*}
proving the claim.   Now let $\gamma(t)$ be a path joining $Z_1, Z_2$ so that $\gamma(t)+Z$ is a path joining $Z_1+Z, Z_2+Z$ and

\begin{equation*}
\int_0^1 F_{{\gamma(t)}+Z}(\dot{\gamma}(t))\ dt<\delta\int_0^1 F_{\gamma(t)}(\dot{\gamma}(t))\ dt\ .
\end{equation*}
Taking the infimum over all the paths $\gamma(t)$ joining $Z_1$ and $Z_2$, we get 

\begin{equation*}
\int_0^1 F_{\gamma(t)+Z}(\dot{\gamma}(t))\ dt< \delta d_{\infty}(Z_1, Z_2)\ . 
\end{equation*}
Again taking the infimum over all the paths $\gamma(t)+Z$ joining $Z_1 + Z$ and $Z_2 +Z$, we get 

\begin{equation*}
d_{\infty}(R(Z_1), R(Z_2)) \le \delta d_{\infty}(Z_1, Z_2)\ .
\end{equation*} 
   	
Finally we get

\begin{equation*}
\begin{aligned}
d_{\infty}(S(Z_1), S(Z_2)) &= d_{\infty}(PRP^{-1}(Z_1), PRP^{-1}(Z_2))\\
&= d_{\infty}(RP^{-1}(Z_1), RP^{-1}(Z_2)) \\
&\le \delta d_{\infty}(P^{-1}(Z_1), P^{-1}(Z_2)) \\
&= \delta d_{\infty}(Z_1, Z_2 )\ ,
\end{aligned}
\end{equation*}
completing the proof.
\end{proof}
  
Recall that the Finsler metric $d_\infty(\cdot,\cdot)$ becomes the hyperbolic metric when $n=1$.  Therefore we can compare distances of numbers in the upper half plane and corresponding counterparts in the Siegel space.  We end the paper with the following result.
\begin{theorem} 
Let $\mathbb{B}(0)$ be the unit ball in $\C^n$.  For any $Z_1, Z_2 \in \mathcal S_n$ and $v\in \mathbb B(0)$, 

\begin{equation*}
d_{\infty}(v^*Z_1v, v^*Z_2v) \leq d_{\infty}(Z_1, Z_2)\ .
\end{equation*}
\end{theorem}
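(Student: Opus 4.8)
The plan is to exploit the variational definition of $d_\infty$ as an infimum of Finsler path-lengths, combined with a single pointwise comparison between the two Finsler norms. First I would record the basic fact that makes the statement meaningful: for any nonzero $v\in\C^n$ and any $Z=X+iY\in\mathcal S_n$, the scalar
\[
v^*Zv = v^*Xv + i\,v^*Yv
\]
lies in $\C^+=\mathcal S_1$. Indeed, since $X$ and $Y$ are \emph{real} symmetric matrices, both $v^*Xv$ and $v^*Yv$ are real, and $v^*Yv>0$ because $Y>0$. Thus the left-hand side of the claimed inequality is a genuine distance in $(\mathcal S_1,d_\infty)=(\C^+,\rho)$, as recalled before the theorem.

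Next I would compare lengths path-by-path. Given any differentiable path $Z(t)$, $t\in[0,1]$, joining $Z_1$ to $Z_2$ in $\mathcal S_n$, push it forward to $z(t):=v^*Z(t)v$, which joins $v^*Z_1v$ to $v^*Z_2v$ and stays in $\C^+$, with velocity $\dot z(t)=v^*\dot Z(t)v$. The crux is the pointwise estimate $F_{z(t)}(\dot z(t))\le F_{Z(t)}(\dot Z(t))$. Granting this, integrating over $[0,1]$ shows the hyperbolic length of $z$ is at most the Finsler length of $Z$; since $z$ is an admissible competitor for $d_\infty(v^*Z_1v,v^*Z_2v)$, taking the infimum over all paths $Z(t)$ gives the theorem.

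It then remains to prove the pointwise inequality. Writing $Y=\operatorname{Im}Z$, $W=\dot Z$ (a complex symmetric matrix), $y=\operatorname{Im}z=v^*Yv$ and $w=\dot z=v^*Wv$, and using that for $n=1$ the Finsler norm is $F_z(w)=|w|/y$, the desired bound reads
\[
\frac{|v^*Wv|}{v^*Yv}\le \bigl\|Y^{-\frac12}WY^{-\frac12}\bigr\|.
\]
I would substitute $u:=Y^{\frac12}v$ and set $M:=Y^{-\frac12}WY^{-\frac12}$, so that $v^*Yv=u^*u=\|u\|^2$ and $v^*Wv=u^*Mu$. The inequality becomes $|u^*Mu|\le\|M\|\,\|u\|^2$, which follows at once from Cauchy--Schwarz: $|u^*Mu|\le\|u\|\,\|Mu\|\le\|M\|\,\|u\|^2$.

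There is no serious obstacle here; the computation is short once the right reformulation is found. The only step requiring care is recognizing that the scale-invariant ratio on the left is exactly a normalized quadratic form $u^*Mu/\|u\|^2$ dominated by the operator norm $\|M\|$, which is precisely where the change of variables $u=Y^{1/2}v$ does the work. I would also remark that the unit-ball hypothesis on $v$ plays no role beyond $v\neq 0$, since numerator and denominator are both homogeneous of degree two in $v$, so the inequality in fact holds for every nonzero $v$.
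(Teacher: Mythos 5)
Your proposal is correct and follows essentially the same route as the paper's proof: push the path forward via $z(t)=v^*Z(t)v$, prove the pointwise bound $F_{z(t)}(\dot z(t))\le F_{Z(t)}(\dot Z(t))$ by Cauchy--Schwarz (your substitution $u=Y^{1/2}v$, $M=Y^{-1/2}\dot Z Y^{-1/2}$ is exactly the paper's step $\frac{1}{v^*Yv}\langle Y^{1/2}v, M\,Y^{1/2}v\rangle\le\|M\|$), then integrate and take infima over paths. Your closing remark that only $v\neq 0$ is needed, by degree-two homogeneity in $v$, is also accurate.
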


\begin{proof}
If $Z(t)$ is a path joining $Z_1$ and $Z_2$ then $v^*Z(t)v$ is a path joining $v^*Z_1v$ and $v^*Z_2v$ with $v\in \mathbb B(0).$  For $Z(t) = X(t)+iY(t)$ and $v \in \mathbb B(0)$ we have 

\begin{align*}
\operatorname{Im}(v^*Z(t)v) & = \frac{1}{2i} (v^*Z(t)v - (v^*Z(t)v)^*)
\\ & =\frac{1}{2i} v^*(Z(t) - Z(t)^*) v^* \\ & = v^*Y(t) v^*
  \end{align*}
with $v^*Y(t)v >0$.  Recall that for a $n\times n$ matrix $A$ and a vector $x$ we have the following submultiplicative estimate: $\|Ax\|\le \|A\|\|x\|$.  Using this, the Cauchy-Schwarz inequality, and the fact that the Finsler metric becomes the hyperbolic metric in one dimension we get

\begin{equation*}  	
\begin{aligned}
F_{v^* Z(t)v}(v^*\dot{Z}(t) v) & = \| (v^*Yv)^{-\frac{1}{2}} (v^* \dot{Z}(t)v) (v^*Yv)^{-\frac{1}{2}} \| \\ 
& =\frac{1}{v^*Yv} \| v^*Y^{\frac{1}{2}} Y^{-\frac{1}{2}} \dot{Z}(t)Y^{-\frac{1}{2}} Y^{\frac{1}{2}}v\|\\ 
& =\frac{1}{v^*Yv} \langle Y^{\frac{1}{2}}v,  Y^{-\frac{1}{2}}\dot{Z}(t)Y^{-\frac{1}{2}} Y^{\frac{1}{2}}v\rangle \\ 
& \leq \frac{1}{v^*Yv} \|Y^{\frac{1}{2}}v \|^2 \|Y^{-\frac{1}{2}}\dot{Z}(t)Y^{-\frac{1}{2}}\| \\ 
&= \|Y^{-\frac{1}{2}}\dot{Z}(t)Y^{-\frac{1}{2}}\| = F_{Z(t)}(\dot{Z}(t))\ .
\end{aligned}
\end{equation*}
Upon integration we get,

\begin{equation*}
\int_0^1 F_{v^*Z(t)v}(v^*\dot{Z}(t)v)\ dt \le \int_0^1 F_{Z(t)}(\dot{Z}(t))\ dt\ .
\end{equation*}
As before, taking the infimum over all the paths $Z(t)$ joining $Z_1$ and $Z_2$, we get 

\begin{equation*}
\int_0^1 F_{v^*Z(t)v}(v^*\dot{Z}(t)v)\ dt \le  d_{\infty}(Z_1, Z_2)\ .
\end{equation*}
Finally taking the infimum over all the paths $v^*Z(t)v$ joining $v^*Z_1v$ and $v^*Z_2v$, we get

\begin{equation*}
d_{\infty}(v^*Z_1v, v^*Z_2v) \leq  d_{\infty}(Z_1, Z_2)\ .
\end{equation*}
This completes the proof.
\end{proof}

\end{document}